\let\mathcal\mathscr
\numberwithin{equation}{section}
\newtheorem{thm}{Theorem}[section]
\newtheorem{lem}[thm]{Lemma}
\def\R{\mathbb{R}}
\def\Q{\mathbb{Q}}
\def\Z{\mathbb{Z}}
\def\F{\mathbb{F}}
\def\PP{\mathbb{P}}
\newcommand{\ZZp}{\mathbb{Z}_{\mathrm{prim}}}
\def\mmod{\!\!\!\mod}
\def\tilde{\widetilde}
\DeclareMathOperator{\Gal}{Gal}
\DeclareMathOperator{\Spec}{Spec}
\DeclareMathOperator{\Proj}{Proj}
\theoremstyle{remark}
\newtheorem*{ack}{Acknowledgement}
\newtheorem{defi}[thm]{Definition}
\newtheorem{rema}{Remark}[section]
\def\a{\mathbf{y}}
\def\y{\mathbf{y}}
\def\b{\mathbf{t}}
\def\u{\mathbf{u}}
\begin{document}

\title[Local solubility for a family of quadrics]{Local solubility for a family of quadrics over a split quadric surface}

\author{Tim Browning}
\author{Julian Lyczak}
\author{Roman Sarapin}

\address{IST Austria\\
Am Campus 1\\
3400 Klosterneuburg\\
Austria}
\email{tdb@ist.ac.at}
\email{jlyczak@ist.ac.at}
\email{romansarapin13@gmail.com}

\subjclass[2010]
{14G05  
(11N36, 
14D10) 
}

\begin{abstract}
We study  the density of 
everywhere locally soluble diagonal quadric surfaces, parameterised by rational points that lie on a split quadric surface.
\end{abstract}

\date{\today}

\maketitle
\thispagestyle{empty}

\section{Introduction}

A great deal of recent activity has been directed at the question of 
local solubility for families of varieties.  
Let $X$ and $Y$
 be  smooth, proper and geometrically irreducible varieties over  $\Q$, equipped with a dominant morphism $\pi \colon X\to Y$,  with geometrically integral generic fibre.  Suppose  that 
 $Y$ is Fano and equipped with an anticanonical height function 
 $H \colon Y(\Q)\to \R$.  This short note is concerned with  the behaviour of the counting function
$$
N_{\text{loc}}(\pi,B)=\#\left\{
y\in Y(\Q)\cap \pi(X(\mathbf{A}_\Q)) \colon H(y)\leq B
\right\},
$$
as $B\to \infty$, 
where $\mathbf{A}_\Q$ denotes the ring of ad\`eles.   We shall consider an explicit example, with  $Y\subset \PP^3$  a split quadric surface,
which calls into question our expectations about the behaviour of $N_{\text{loc}}(\pi,B)$.

The quantity $N_{\text{loc}}(\pi,B)$  measures the density of fibres of $\pi$ which are everywhere locally soluble, and it has received the most attention  when $Y$ is projective space. 
For example, it follows from work of 
 Poonen and Voloch \cite[Thm.~3.6]{PV} that 
\begin{equation}\label{eq:PV}
N_{\text{loc}}(\pi,B)\sim c_{d,n} B,
\end{equation}
as $B\to \infty$, for an explicit constant $c_{d,n}>0$, when $\pi \colon X\to \PP^N$ is the   family of degree $d$ hypersurfaces in $\PP^n$ over $\Q$, with  $(d,n)\neq (2,2)$ and $N=\binom{n+d}{d}-1$. 
Moreover, when $Y=\PP^n$, 
work of Loughran and Smeets \cite{LS} shows that 
\begin{equation}\label{eq:upper}
N_{\text{loc}}(\pi,B)\ll \frac{B}{(\log B)^{\Delta(\pi)}},
\end{equation}
for a certain quantity $\Delta(\pi)\geq 0$, whose definition will be recalled in 
\S \ref{s:geom} and which can be extended to arbitrary fibrations $\pi \colon X\to Y$. 
For $Y=\PP^n$, it is   conjectured in \cite{LS}  that the upper bound
\eqref{eq:upper} should be sharp whenever the fibre 
$\pi^{-1}(D)$
above each codimension one point  $D$ in $\PP^n$ contains a component of multiplicity one.

It is natural to ask   what  happens when  $Y$ is a Fano variety over $\Q$ that is 
not projective space. We shall assume that $Y(\Q)$ is Zariski dense in $Y$ and that $Y$ 
conforms to the version of the Manin conjecture \cite{f-m-t} without   accummulating subvarieties, so that 
$$
\#\{y\in Y(\Q) \colon H(y)\leq B\} \sim c_Y B(\log B)^{\rho_Y-1},
$$
as $B\to \infty$, where $c_Y>0$ is a constant and $\rho_Y$ is the rank of the Picard group of $Y$. 
Assuming that $Y$ is a Fano variety over $\Q$ without   accummulating subvarieties, 
a naive combination of the Manin  conjecture with the upper bound \eqref{eq:upper} might lead us to suppose that 
\begin{equation}\label{eq:guess}
N_{\text{loc}}(\pi,B)\sim  \frac{c_\pi B (\log B)^{\rho_Y-1}}{(\log B)^{\Delta(\pi)}},
\end{equation}
as $B\to \infty$, for a suitable constant $c_\pi>0$.  This is compatible with \eqref{eq:PV}, since in this case $\Delta(\pi)=0$ and   $\rho_{\PP^N}=1$.
It also compatible with an upper bound of
Browning and Loughran \cite[Thm.~1.10]{BL},
which concerns the case where  $Y\subset \PP^n$ is a smooth quadric over $\Q$ of dimension at least $3$, since then $\rho_Y=1$.

We  study an explicit example that 
addresses a case where the variety $Y$ has Picard number $\rho_Y>1$ and which contradicts the expectation  \eqref{eq:guess}. Let 
$Y\subset \mathbb P^3$ be the split 
quadric surface
 \begin{equation}\label{eq:Y}
 y_0y_1=y_2y_3.
 \end{equation}
Clearly $Y$ is smooth and  $\rho_Y=2$.
Let  $X\subset \mathbb P^3\times \mathbb P^3$ be the variety defined by the pair of equations
\begin{equation}\label{eq:X}
\begin{split}
y_0x_0^2+y_1x_1^2+y_2x_2^2+y_3x_3^2&=0\\
y_0y_1&=y_2y_3.
\end{split}
\end{equation}
Then 
$X$ is equipped with a dominant morphism $\pi\colon X\to Y$ whose generic fibre is geometrically irreducible. We shall see in \S \ref{s:geom} that the total space $X$ is singular  and so we let  $\tilde X\to X $ be a desingularisation. Taking 
$\tilde \pi$ to be the composition with $\pi\colon X\to Y$, we shall prove the following bounds 
for the associated counting function $N_{\text{loc}}(\tilde \pi,B)$ in \S \ref{s:ant}.

\begin{thm}\label{t}
We have 
$
B\ll N_{\text{loc}}(\tilde \pi,B)\ll B.
$
\end{thm}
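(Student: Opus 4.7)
The plan is to parametrize $Y$ via its two rulings. The split quadric $Y$ is isomorphic to $\PP^1\times\PP^1$ via the map $((s_0:s_1),(t_0:t_1))\mapsto\y=(s_0t_0:s_1t_1:s_0t_1:s_1t_0)$. Expressing $\y$ in primitive integer coordinates corresponds to a choice of primitive pairs $(s_0,s_1),(t_0,t_1)\in\Z^2$, unique up to sign, and the anticanonical height becomes $H(\y)=h(s)^2h(t)^2$, where $h(s)=\max(|s_0|,|s_1|)$ is the standard height on $\PP^1(\Q)$. Under this parametrization the fibre $\pi^{-1}(\y)$ is the diagonal quadric
\[
Q_{s,t}\colon\ s_0t_0\,x_0^2+s_1t_1\,x_1^2+s_0t_1\,x_2^2+s_1t_0\,x_3^2=0,
\]
whose discriminant $(s_0s_1t_0t_1)^2$ is a square. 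Hence $Q_{s,t}$ is isotropic over $\Q_p$ if and only if its Hasse invariant $\ep_p(Q_{s,t})$ is trivial.

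For the lower bound I rely on the identity $Q_{s,t}(1,1,1,1)=(s_0+s_1)(t_0+t_1)$. Taking $t=(1,-1)$, so that $t_0+t_1=0$, the point $(1:1:1:1)$ lies on $Q_{s,t}$ for every $s$, and hence the fibre is globally---and in particular everywhere locally---soluble. Since the number of primitive integer pairs $(s_0,s_1)$ with $h(s)\leq\sqrt B$ is $\asymp B$, and each yields a distinct $\y$ with $H(\y)\leq B$ (and a smooth fibre, so local solubility for $\tilde\pi$ agrees with that for $\pi$), this gives $N_\text{loc}(\tilde\pi,B)\gg B$.

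For the upper bound I expand $\ep_p(Q_{s,t})=\prod_{i<j}(a_i,a_j)_p$ via Hilbert symbols. At an odd prime $p$ dividing exactly one of $s_0,s_1,t_0,t_1$ to odd power, a direct computation yields $\ep_p=(-t_0t_1/p)$ if $p\mid s_0s_1$, and $\ep_p=(-s_0s_1/p)$ if $p\mid t_0t_1$. Everywhere local solubility therefore forces $-t_0t_1$ to be a quadratic residue modulo every odd prime dividing the squarefree kernel $D_s$ of $s_0s_1$, and symmetrically $-s_0s_1$ to be a quadratic residue modulo $D_t$. Writing the joint indicator as $\prod_{p\mid D_s}\frac{1+(-t_0t_1/p)}{2}\cdot\prod_{p\mid D_t}\frac{1+(-s_0s_1/p)}{2}$, summing over primitive $(s,t)$ with $h(s)h(t)\leq\sqrt B$, and expanding each product as a sum of Jacobi symbols over divisors $d\mid D_s,e\mid D_t$, one isolates a diagonal contribution from the two ``always soluble'' lines $s_0+s_1=0$ and $t_0+t_1=0$---on which the relevant character becomes trivial and which contribute $O(B)$ directly---together with off-diagonal character sums controlled by Pólya--Vinogradov-type cancellation, yielding $O(B)$ in total.

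The principal obstacle is that neither of the two Legendre-symbol conditions (on $s$ depending on $t$, or vice versa) is individually sharp: using only one family of conditions gives merely $N_\text{loc}(\tilde\pi,B)\ll B\log\log B$, so the two must be exploited simultaneously, and the $\gg B$ worth of locally soluble fibres concentrated along the two ``always soluble'' lines must be separated from the sparser generic contribution. Additional technical care is needed at the prime $p=2$ (where the Hilbert-symbol analysis is more delicate), at primes dividing more than one of $s_0,s_1,t_0,t_1$ at once, and at the archimedean place (where local solubility amounts to the $y_i$ not all having the same sign).
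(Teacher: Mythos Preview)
Your lower bound is exactly the paper's: the line $t_0+t_1=0$ (equivalently $y_0+y_2=y_1+y_3=0$) carries $\asymp B$ rational points of $Y$ whose fibres all pass through $(1:1:1:1)$.

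For the upper bound the paper takes a genuinely different route. It does not compute Hasse invariants or expand into Jacobi symbols. Instead, after reducing to squarefree coordinates $u_0,\dots,u_3$ in the $\PP^1\times\PP^1$ parametrisation, it applies the multidimensional large sieve directly in $\Z^4$. The sieve set $\Omega_p\subset\F_p^4$ consists of residue classes where exactly one coordinate vanishes and the corresponding product $-u_ju_k$ is a non-residue; one computes $|\Omega_p|=2p^3(1-1/p)^3$, so the sieve density is $\sim 2/p$, and the large sieve yields a saving of $(\log U_{\min})^2$ on each dyadic box. A straightforward dyadic summation over the hyperbolic region then gives $N^\circ(B)=O(B)$. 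The two residue conditions you isolate are precisely what drive the density $2/p$, but the sieve packages them without ever separating characters.

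What you have written for the upper bound is a plan rather than a proof, and the gap is exactly at the step ``off-diagonal character sums controlled by P\'olya--Vinogradov-type cancellation, yielding $O(B)$ in total''. After expanding over $d\mid D_s$, $e\mid D_t$ you face a bilinear sum in which the modulus of each Jacobi symbol depends on the \emph{other} variable, and the hyperbolic constraint $h(s)h(t)\le\sqrt B$ forces one of $s,t$ to be short relative to that modulus; P\'olya--Vinogradov alone then gives nothing, and one needs a further large-sieve-for-characters or double-oscillation argument of Friedlander--Iwaniec type. The paper itself flags this difficulty when proposing the character-sum method for the refined asymptotic after removing thin sets, remarking that the hyperboloid height conditions ``will undoubtedly complicate the argument''. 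Your approach, if carried through, would point towards that asymptotic; the large sieve buys a short, clean proof of the bare $O(B)$ that sidesteps $p=2$, shared prime factors, and the bilinear structure entirely.
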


Since the Hasse principle holds for quadratic forms, we deduce from Theorem~\ref{t}  that 
the same upper and lower bounds hold for the quantity 
$ N_{\text{glob}}(\tilde \pi,B)$,  counting the number of 
$y\in Y(\Q)\cap \pi(\tilde X(\Q))$ such that $H(y)\leq B$.

The  next result, proved in \S \ref{s:geom},  shows that 
our  example is not compatible with the naive expectation 
\eqref{eq:guess}, since 
 $\rho_Y=2$.

\begin{thm}\label{t'}
We have $\Delta(\tilde \pi)=2$.
\end{thm}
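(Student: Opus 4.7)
My plan is to compute $\Delta(\tilde\pi)$ directly from its definition (Loughran--Smeets), as
\[
\Delta(\tilde\pi) = \sum_{D \in Y^{(1)}} \bigl(1 - \delta_D(\tilde\pi)\bigr),
\]
where $\delta_D(\tilde\pi)$ is the proportion of elements of $\Gal(L/\kappa(D))$ (for $L/\kappa(D)$ a finite Galois extension splitting all multiplicity-one geometric components of the fibre $\tilde\pi^{-1}(\eta_D)$) that fix at least one such component.

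First I would identify the codimension-one points $D \in Y^{(1)}$ for which the fibre is not already split. The discriminant of the diagonal quadric in \eqref{eq:X} is proportional to $y_0y_1y_2y_3$, and intersecting this with $Y$ yields the union of the four rulings
\[
L_{02}=\{y_0=y_2=0\},\quad L_{03}=\{y_0=y_3=0\},\quad L_{12}=\{y_1=y_2=0\},\quad L_{13}=\{y_1=y_3=0\}.
\]
Above any other codimension-one point, the generic fibre is a smooth, geometrically integral quadric in $\PP^3$, which contributes $0$ to $\Delta(\tilde\pi)$.

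Next I would compute the local contribution at each of the four lines. Take $L_{02}$ (the remaining cases are symmetric). Above its generic point, the fibre of $\pi$ is the degenerate quadric $y_1X_1^2 + y_3X_3^2 = 0$ in $\PP^3_{\kappa(L_{02})}$. Over the algebraic closure this breaks into two planes meeting along $\{X_1=X_3=0\}$, and the two planes are defined over the quadratic extension $\kappa(L_{02})\bigl(\sqrt{-y_1y_3}\bigr)$. Since $\kappa(L_{02})$ is a rational function field in $y_1/y_3$, the element $-y_1y_3$ is not a square, so this extension is nontrivial and its Galois group, of order two, swaps the two planes. Only the identity element fixes a component, so $\delta_{L_{02}}(\pi) = 1/2$. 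An identical computation at each of $L_{03}, L_{12}, L_{13}$ yields $\delta_{L_{ij}}(\pi) = 1/2$.

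Finally I would verify that passing from $\pi$ to the desingularisation $\tilde\pi$ does not affect the outcome. The invariant $\delta_D$ depends only on the generic fibre above $D$ as a $\kappa(D)$-scheme and is unchanged by proper birational modifications of the source. A direct Jacobian calculation shows that the singular locus of $X$ projects onto the union of the four lines $L_{ij}$, so $\tilde X \to X$ introduces no new codimension-one bad divisor in $Y$; above each $L_{ij}$, the strict transforms of the two planes (together with any exceptional divisors, which come in Galois-paired components) inherit a Galois action with the same proportion of fixed-point free elements. Summing yields
\[
\Delta(\tilde\pi) \;=\; 4\cdot\bigl(1 - \tfrac{1}{2}\bigr) \;=\; 2,
\]
as required. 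The main obstacle is this last step: one must either invoke a suitable birational-invariance statement for $\delta_D$ from \cite{LS}, or perform an explicit local analysis of $\tilde X \to X$ above each line $L_{ij}$ to confirm that the exceptional divisors, if they dominate $L_{ij}$, occur in Galois-swapped pairs and therefore leave the proportion $\delta_{L_{ij}}$ unchanged.
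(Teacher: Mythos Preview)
Your overall strategy matches the paper's: identify the four lines $D_{i,j}=\{y_i=y_j=0\}$ as the only codimension-one points with non-split fibres, compute $\delta_{D_{i,j}}(\pi)=\tfrac12$ for the singular $X$, and then argue that passing to $\tilde X$ preserves this value. The first two steps are fine, but the gap is precisely the one you flag at the end, and it is essentially the whole content of the theorem. Your assertion that $\delta_D$ ``is unchanged by proper birational modifications of the source'' is not valid here: the singular locus $\Sigma$ of $X$ \emph{does} dominate each line $D_{i,j}$, so any desingularisation genuinely alters the fibre over the generic point $\eta_{i,j}$, and the new exceptional components could in principle change $\delta_{D_{i,j}}$. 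No general birational-invariance statement from \cite{LS} covers this situation, so your first alternative is unavailable; the second alternative (explicit local analysis) must actually be carried out, whereas you have only asserted its outcome.

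The paper carries this out in full. It blows up $X$ along $\bigcup_{i,j}\Sigma_{i,j}$ to obtain $X'$ and computes, working over $\mathbb{Q}(u)$ with $u=y_i/y_j$, that (i) $X'$ is smooth in a neighbourhood of each $\eta_{i,j}$, and (ii) the fibre $X'_{\eta_{i,j}}$ has exactly four geometric components---the two strict transforms and two from the exceptional divisor---conjugated in pairs by the \emph{same} quadratic extension. That the extension is the same is essential: were the two pairs swapped by independent quadratic characters, the relevant Galois group would be $(\mathbb{Z}/2)^2$ and one would get $\delta_{D_{i,j}}=\tfrac34$, not $\tfrac12$. Point (i) then guarantees that a strong desingularisation $\tilde X\to X'$ (Hironaka) is an isomorphism over $\eta_{i,j}$, whence $\tilde X_{\eta_{i,j}}\cong X'_{\eta_{i,j}}$ and $\delta_{D_{i,j}}(\tilde\pi)=\tfrac12$. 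Without this explicit computation your argument is incomplete.
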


Just as thin sets have been found to have a large influence on modern formulations of the Manin conjecture, as discussed by Peyre \cite{peyre-freedom},
so we expect that the naive expectation
\eqref{eq:guess} should hold after the removal of appropriate thin sets from $Y(\Q)$. 
In fact, for our example \eqref{eq:X}, we conjecture that 
$$
\#\left\{
y\in Y(\Q) \cap \pi(\tilde X(\mathbf{A}_\Q)) \colon H(y)\leq B, ~-y_0y_2\neq \square, ~
-y_0y_3\neq \square
\right\} \sim c \frac{B}{\log B},
$$
for an appropriate  constant $c>0$. 
It seems plausible that the character sum method developed by Friedlander and Iwaniec \cite{fi} will prove useful  for tackling this counting problem, although the hyperboloid height conditions that arise from parameterising the points in $Y(\mathbb{Q})$ will undoubtedly complicate the argument. 

In this note we have focused attention on the quadric surface bundle $X\to Y$, where $X$ is given by \eqref{eq:X}.  One can also study the conic bundle $\eta \colon W\to Y$, where $W\subset \PP^2\times \PP^3$ is given by
\begin{align*}
y_0x_0^2+y_1x_1^2+y_2x_2^2&=0\\
y_0y_1&=y_2y_3.
\end{align*}
This variety is singular and so we consider the morphism $\tilde\eta \colon\tilde W\to Y$, where $\tilde \eta$ is the composition of a desingularisation
$\tilde W\to W$ with $\eta$. 
The methods of this paper carry over to this case with little adjustment and can be used to prove that 
$B\ll N_{\text{loc}}(\tilde \eta,B)\ll B$. Moreover, one also has $\Delta(\tilde \eta)=2$ in this case, providing a further example that  counters the expectation \eqref{eq:guess}, but one which we believe can be repaired via the removal of appropriate thin sets. 
(In fact, for $i\in \{0,1\}$,  the fibre of $\eta$ above the line $y_i=y_2=0$ has multiplicity two in $W$, whereas the corresponding fibre of $\tilde \eta$ admits  an additional component of multiplicity one.)

\begin{ack}
While working on this paper the first  author was
supported by FWF grant P~32428-N35.  This paper was written while the third author was an intern at IST. 
\end{ack}

\section{Geometric input}\label{s:geom}

We begin by extending the definition 
of 
 $\Delta(\pi)$ 
from \cite{LS} to general fibrations.  
 Let $X$ and $Y$
 be arbitrary  
 proper and geometrically irreducible varieties over  $\Q$, equipped with a  
 dominant morphism $\pi \colon X\to Y$, with geometrically
integral generic fibre. Assume that $Y$ is smooth and 
associate the residue field 
 $\kappa(D)$ to any codimension one point  $D\in Y^{(1)}$. Let 
 $S_{D}$ be the set of geometrically irreducible components of $\pi^{-1}(D)$ and 
 choose a finite group $\Gamma_D$ through which the action of $\Gal(\overline{\kappa(D)}/\kappa(D))$ on $S_{D}$ factors.  Then $\Delta(\pi)$ is defined to be 
\begin{equation}\label{eq:def_Delta}
\Delta(\pi)
=\sum_{D\in Y^{(1)}} \left(1-\delta_{D}(\pi)\right),
\end{equation}
where
$$
\delta_{D}(\pi)=
\frac{\#\{
\sigma\in \Gamma_D \colon   \text{$\sigma$ acts with a fixed point on 
$S_{D}$}\}}
{\#\Gamma_D}.
$$

Henceforth, we take $Y\subset \PP^3$ to be the split quadric \eqref{eq:Y} and $X\subset \PP^3\times\PP^3$ to be the variety \eqref{eq:X}. 
Consider the fibration $\pi \colon X\to Y$. There are precisely $4$ codimension one points in $ Y $ which produce reducible fibres. These are the four lines 
\begin{equation}\label{eq:lines}
D_{i,j}=\{y_i=y_j=0\}, \quad \text{for $i\in \{0,1\}$,  $j\in \{2,3\}$}.
\end{equation} 
Above each of these the fibre of $\pi$ is split by a quadratic extension, so that $\delta_{D_{i,j}}(\pi)=\frac{1}{2}$. It follows that $\Delta(\pi)=4\cdot \frac{1}{2}=2$
in \eqref{eq:def_Delta}. 

However, the Jacobian of $X$ is 
$$ 
J=\begin{pmatrix}
x_0^2 & x_1^2 & x_2^2 & x_3^2 & 2y_0x_0 & 2y_1x_1 & 2y_2x_2 & 2y_3x_3\\
y_1  &     y_0 &   -y_3    &  -y_2      &  0 &            0  &              0 &             0
\end{pmatrix}.
$$ 
Thus $X$ is singular, with  singular locus  supported on the union of subschemes 
$$
\Sigma_{i,j} = V(y_i,y_j,x_{i'},x_{j'},y_{j'}x_i^2+y_{i'}x_j^2), 
\quad
\text{ for $\{i,i'\}=\{0,1\}$ and $\{j,j'\}=\{2,3\}$},
$$
where we note that a choice of $i$ and $j$ uniquely determines $i'$ and $j'$. In fact, away from the intersection of two such subschemes, the singular locus $\Sigma$ is precisely the union of these four reduced subschemes.
Let $\tilde X \to X$ be a desingularisation of $X$ and let $\tilde \pi$ be the composition with $\pi \colon X \to Y$. The main aim of this section is to prove that 
\begin{equation}\label{eq:goal}
\Delta(\tilde \pi)=2,
\end{equation}
as claimed in
Theorem \ref{t'}.

We begin by establishing the following geometric lemma that will be useful in our treatment. 

\begin{lem}\label{lem:spread}
Let $C$ and $C'$ be two integral curves smooth and projective over a field $k$. Let us write $\kappa$ and $\kappa'$ for their function fields. Consider a proper morphism $$\rho \colon W \to C \times C'.
$$ 
If $W\times C'_{\kappa}$ is smooth over $\kappa$ and 
 $W\times {C}_{\kappa'}$ is smooth over $\kappa'$,  then the image of the singular locus of $W/k$ under $\rho$ has dimension $0$.
\end{lem}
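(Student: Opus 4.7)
The approach is to show that every point of the singular locus $\Sigma := \mathrm{Sing}(W/k)$ maps under $\rho$ to a closed point of $C\times C'$. Since $\rho$ is proper, $\rho(\Sigma)$ is then a closed subset of the surface $C\times C'$ all of whose points are closed, and such a subset is automatically zero-dimensional (each of its finitely many irreducible components has a generic point which, being closed in $C\times C'$, must coincide with the whole component).

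First I would argue by contradiction and suppose $\rho(\Sigma)$ contains an irreducible curve $Z$. As $C\times C'$ is two-dimensional and $Z$ is one-dimensional, $Z$ must dominate at least one factor -- say $C$, the other case being symmetric. Then the generic point $\eta_Z$ of $Z$ projects to the generic point of $C$, that is, to $\Spec\kappa$. Since $\eta_Z\in\rho(\Sigma)$, I can pick $w\in\Sigma$ with $\rho(w)=\eta_Z$, and the task reduces to deriving a contradiction by showing that $W/k$ is smooth at this particular $w$.

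The key identification is that because $w$ lies above the generic point of $C$ and $\mathcal{O}_{C,\eta}=\kappa$, the base change formula gives $\mathcal{O}_{W,w}=\mathcal{O}_{W\times_{C\times C'}C'_\kappa,\,w}$. By hypothesis, $W\times_{C\times C'}C'_\kappa$ is smooth over $\kappa$, so $\mathcal{O}_{W,w}$ is geometrically regular as a $\kappa$-algebra. Because $C$ is smooth over $k$, the function field extension $\kappa/k$ is itself geometrically regular (equivalently, separable as a field extension). Invoking the standard fact that a composition of geometrically regular maps is geometrically regular, I conclude that $\mathcal{O}_{W,w}$ is geometrically regular over $k$, i.e.\ $W/k$ is smooth at $w$, contradicting $w\in\Sigma$.

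The main obstacle is the final descent of geometric regularity along the transcendental extension $\kappa/k$; concretely, one must verify that $\mathcal{O}_{W,w}\otimes_k k'$ remains regular for every finite extension $k'/k$, which follows from the analogous property over $\kappa$ combined with the separability of $\kappa/k$. This is a standard property of geometrically regular algebras (as developed in EGA~IV, \S 6, or the Stacks Project), and in the intended application -- where $k$ is a field of characteristic zero -- it is automatic.
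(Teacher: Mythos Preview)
Your argument is correct but takes a different route from the paper. The paper argues globally via spreading out: since the generic fibre of the composite $W \to C\times C' \to C$ is smooth over $\kappa$, there is a dense open $U \subset C$ with $W_U \to U$ smooth, and hence $W_U \to \Spec k$ is smooth as a composition of smooth morphisms; symmetrically one finds a dense open $V \subset C'$, whence $\rho(\Sigma) \subset (C\setminus U)\times(C'\setminus V)$ is finite. Your approach is instead pointwise: you pick a putative singular point $w$ lying over the generic point of one factor, identify its local ring with that of the generic fibre, and then descend geometric regularity along the separable extension $\kappa/k$. The paper's method is shorter and sidesteps any commutative-algebra discussion of descent of regularity, using only that compositions of smooth morphisms are smooth; your method, on the other hand, makes the role of the separability of $\kappa/k$ explicit and would adapt more readily to settings where one has only pointwise control.
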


\begin{proof}
Consider the composite morphism $$W \to C\times_k C' \to C,$$
which has a smooth generic fibre by assumption. By spreading out \cite[Thm.~3.2.1]{Poonen}, we see that $W_U \to U$ is smooth for a dense open subset $U \subset C$. We conclude that $W_U \to U \to \Spec k$ is smooth. Similarly, we have a $V \subset C'$ such that $W_V \to V$ is smooth. We conclude that the  image of the smooth locus of $W \to \Spec k$ under $\rho$ contains both $U\times C'$ and $C\times V$.
\end{proof}

It suffices to prove \eqref{eq:goal} for a single desingularisation.

\begin{defi}
Let $X' \to X$ be the blowup in the union of the closed subschemes $\Sigma_{i,j}$. Let $\tilde X \to X'$ be a fixed strong desingularisation.
\end{defi}

We have seen that the image of the singular locus $\Sigma$ under $X \to Y$ is one-dimensional and supported on the union of lines $D_{i,j} \subset Y$ in \eqref{eq:lines}. 
Let $\eta_{i,j}$ be the generic point of the line $D_{i,j}.$
We will prove that the image of the singular locus of  $X'$ under $\pi' \colon X' \to Y$ has  lower dimension.

\begin{lem}
The image of the singular locus $\Sigma'$ of $X'$ under $X' \to Y$ is (at most) zero-dimensional. The fibre $X'_{\eta_{i,j}}$ has four geometrically irreducible components, of which  two  make up the strict transform of $X_{\eta_{i,j}}$ and two come from the exceptional divisor of the blowup $X'\to X$.
These components are conjugated in pairs.
\end{lem}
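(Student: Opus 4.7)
I would prove both assertions from an explicit local analysis at a generic point of one of the components $\Sigma_{i,j}$, combined with Lemma~\ref{lem:spread}. It is convenient to identify $Y$ with $\PP^1_s\times\PP^1_t$ via $y_0=s_0t_0$, $y_1=s_1t_1$, $y_2=s_0t_1$, $y_3=s_1t_0$. Under this identification, $D_{0,2}$ and $D_{1,3}$ are fibres of the projection to $\PP^1_s$ while $D_{0,3}$ and $D_{1,2}$ are sections (and the roles swap for $\PP^1_t$). The plan for the first assertion is to apply Lemma~\ref{lem:spread} to the composition $X'\to Y\cong\PP^1\times\PP^1$; since the involution $y_2\leftrightarrow y_3$, $x_2\leftrightarrow x_3$ of $X$ exchanges the two $\PP^1$-factors of $Y$, it suffices to check smoothness of the generic fibre of $X'\to\PP^1_s$, and of the four components of $\Sigma$ only $\Sigma_{0,3}$ and $\Sigma_{1,2}$ meet that generic fibre.

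The central technical step is to show that, at a generic point of $\Sigma_{0,3}$, the total space $X$ is \'etale-locally isomorphic to $W\times\A^1_\alpha$, where $W=\{\epsilon\gamma+\xi_1^2+\xi_2^2=0\}$ is the standard threefold ordinary double point and $\Sigma_{0,3}$ corresponds to $\{0\}\times\A^1_\alpha$. This is a routine affine chart computation: on the chart $y_2\neq 0$, $x_3\neq 0$ one uses $(y_0,y_1)$ as local coordinates on $Y$, translates a point of $\Sigma_{0,3}$ to the origin via the equation $x_0^2+y_1x_3^2=0$, and reduces the defining equation of $X$ to the stated normal form by completing the square in $(y_1,x_0)$ and absorbing the coefficient of $x_1^2$ with an \'etale-local square root of a unit. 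Once this is done, the blow-up $X'\to X$ is \'etale-locally $\tilde W\times\A^1_\alpha$, where $\tilde W\to W$ is the usual desingularisation of the ODP with smooth exceptional quadric $\PP^1\times\PP^1$. In particular $X'$ is smooth at a generic point of $\Sigma_{0,3}$, so the generic fibre of $X'\to\PP^1_s$ is smooth, which is what Lemma~\ref{lem:spread} requires.

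The second assertion follows from the same local picture. Geometrically $X_{\eta_{0,3}}$ is the union $L^+\cup L^-$ of two planes in $\PP^3$ meeting along the singular line $\{x_1=x_2=0\}$, defined over the quadratic extension $K=\kappa(\eta_{0,3})(\sqrt{-y_1y_2})$; meanwhile $\Sigma_{0,3}\cap X_{\eta_{0,3}}$ consists of two conjugate points $p^+, p^-$ lying on that line and defined over the \emph{same} extension $K$, since $\sqrt{-y_2/y_1}$ and $\sqrt{-y_1/y_2}$ generate the same quadratic extension (their product is $\pm 1$). The local product structure then exhibits four geometrically irreducible components of $X'_{\eta_{0,3}}$: the two strict transforms of $L^\pm$ (each a smooth surface birational to $\PP^2$), and the two copies of $\PP^1\times\PP^1$ arising as exceptional divisors above $p^+$ and $p^-$. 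The nontrivial Galois automorphism of $K/\kappa(\eta_{0,3})$ swaps $L^+\leftrightarrow L^-$ and $p^+\leftrightarrow p^-$ simultaneously, so the four components form two conjugate pairs as claimed.

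The principal obstacle I anticipate is the affine chart computation establishing the \'etale-local product structure $X\cong W\times\A^1$ near $\Sigma_{0,3}$; granted this, the remaining steps are standard, since the blow-up of the threefold ODP is well understood and the Galois pairing is forced by the single discriminant $-y_1y_2$ controlling both splittings.
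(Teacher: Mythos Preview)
Your approach is correct and genuinely different from the paper's. The paper works over $\Q(u)$ from the start and computes the blowup of the affine hypersurface $\{ut+x_{i'}^2+tx_j^2+ux_{j'}^2=0\}$ in $V(t,x_{i'},x_{j'},u+x_j^2)$ by writing down explicit $\Proj$ equations, then checks smoothness and reads off the exceptional divisor over $t=0$ directly as $\Proj\Q(u)[x_j][A,B,C,D]/(u+x_j^2,\,AD+B^2+uC^2)$. You instead recognise the singularity as an \'etale-local product of a threefold ordinary double point with $\A^1$, so that the blowup along $\Sigma_{0,3}$ is automatically $\tilde W\times\A^1$ with smooth exceptional quadric, and the fibre over $\eta_{0,3}$ decomposes as two strict transforms $L^\pm$ plus two exceptional quadrics over the conjugate points $p^\pm$. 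The paper's route is entirely explicit and avoids any normal-form theory; yours is more conceptual and explains \emph{why} one blowup suffices. Two small points worth tightening: the change of variable $\xi_1=\sqrt{y_1}\,x_1$ you sketch actually forces $\sqrt{-1}$ into the \'etale neighbourhood (since $y_1\equiv -x_0^2$ along $\Sigma_{0,3}$); taking $\xi_1=\sqrt{-y_1}\,x_1$ instead gives the split form $\epsilon\gamma-\xi_1^2+\xi_2^2=0$ already over $\kappa(p)$, which is harmless for your conclusions but cleaner. Also, ``$X'$ smooth at a generic point of $\Sigma_{0,3}$'' yields smoothness of $X'_{\Q(s)}$ only after noting that $\Sigma_{0,3}\times_{\PP^1_s}\Spec\Q(s)$ \emph{is} that generic point and that $\Sigma_{1,2}$ is handled by the evident $(0,3)\leftrightarrow(1,2)$ symmetry.
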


\begin{proof}
We identify $Y$ with $\mathbb P^1 \times \mathbb P^1$, where the projective lines have coordinates $u=\frac{y_i}{y_j} = \frac{y_{j'}}{y_{i'}}$ and $t=\frac{y_i}{y_{j'}} = \frac{y_j}{y_{i'}}$, respectively. By Lemma \ref{lem:spread} and symmetry we only need to show that $X' \times_Y \mathbb P^1_{\mathbb Q(u)}$ is smooth over $\mathbb P^1_{\mathbb Q(u)}$. Again, by symmetry, we can restrict to the affine opens where $t\ne \infty$.

Since blowing up commutes with pulling back along flat morphisms we find that $X' \times_Y \mathbb A^1_{\mathbb Q(u)}$ and the blowup of $X \times_Y \mathbb A^1_{\mathbb Q(u)}$ in the pullback of $\Sigma$ are naturally isomorphic over $\mathbb A^1_{\mathbb Q(u)}$. The three subschemes $\Sigma_{i',j}$, $\Sigma_{i,j'}$ and $\Sigma_{i',j'}$ pull back to the empty scheme under $X \times _Y \mathbb A^1_{\mathbb Q(u)} \to X$ and it will therefore suffice to compute the blowup of $X \times _Y \mathbb A^1_{\mathbb Q(u)}$ in the pullback of $\Sigma_{i,j}$.

We see that $X \times _Y \mathbb A^1_{\mathbb Q(u)} \subset \mathbb P^3 \times_{\mathbb Q} \mathbb A^1_{\mathbb Q(u)}$ is given by $ut x_i^2+x_{i'}^2 + tx_j^2 + ux_{j'}^2=0$. The pullback of $\Sigma_{i,j}$ equals $V(t,x_{i'},x_{j'},ux_i^2+x_j^2)$, which is contained in the affine open given by $x_i \ne 0$. By abuse of notation we will use the same notation for the variables on this affine open.

We can compute the blowup $Z$ of $\{ut+x_{i'}^2 + tx_j^2 + ux_{j'}^2=0\} \subset \mathbb A^3 \times_{\mathbb Q} \mathbb A^1_{\mathbb Q(u)}$ in the subscheme $V(t,x_{i'},x_{j'},u+x_j^2)$ explicitly. We find
$$
Z = \Proj \mathbb Q(u)[t][x_{i'}, x_j, x_{j'}][A,B,C,D]/I
$$
where the grading comes from $A$, $B$, $C$ and $D$, and $I$ is the ideal generated by the $2\times 2$-minors of
$$
\begin{pmatrix}
A & B & C & D\\
t  & x_{i'} & x_{j'} & u+x^2_j
\end{pmatrix},
$$
together with the polynomials
$$
ut+x_{i'}^2 + tx_j^2 + ux_{j'}^2,\quad  (x_{i'}^2+ux_{j'}^2)B+tx_{i'}D, 
\quad ux_{j'}C+x_{i'}B+tD, \quad AD+B^2+uC^2. 
$$
One can now directly check that $Z \to \Spec \mathbb Q(u)$ is indeed smooth. Thus the  first statement  follows from Lemma  \ref{lem:spread}.

For the second part we note that the fibre of $Z \to \mathbb A^1_{\mathbb Q(u)}$ over $t=0$ is an affine open of $X'_{\eta_{i,j}}$. It consists of the strict transform of $X_{\eta_{i,j}}$, which we already know to consist of two conjugate components. Any other component comes from the exceptional divisor, which we can compute from the equations above to be
$$
\Proj \mathbb Q(u)[x_j][A,B,C,D]/(u+x_j^2,AD+B^2+uC^2).
$$
Hence, $X'_{\eta_{i,j}}$ has four geometrical components which are conjugated in pairs. 
\end{proof}

It follows from this result  $\delta_{D_{i,j}}(\pi')=\frac12$. Moreover, the singular locus of $X'$  does not meet the fibres $X'_{\eta_{i,j}}$ over the generic points $\eta_{i,j}$ of the $D_{i,j}$. By using a strong desingularisation, which was proven to exist by Hironaka \cite{Hironaka64}, we can conclude that $\tilde X_{\eta_{i,j}} \xrightarrow{\cong} X'_{\eta_{i,j}}$ over $Y$.
It therefore follows that 
$
\delta_{D_{i,j}}(\tilde \pi)=\frac12,
$
which thereby  completes  the proof of  \eqref{eq:goal}, via \eqref{eq:def_Delta}.

\section{Analytic  input}\label{s:ant}

In this section we prove  Theorem \ref{t}.
We note that the map $\tilde X\to X$ is an isomorphism outside the locus of $y\in \PP^3$ such that 
$y_0\dots y_3=0$.  Let $|\y|=\max_{0\leq i\leq 3}|y_i|$ if $y\in \PP^3(\Q)$  is represented by a  vector
$\y\in \ZZp^4$. The anticanonical height function on $Y(\Q)$ is then $H(y)=|\y|^2$.
Since there are $O(\sqrt{B})$ choices of $y\in \PP^3(\Q)$ of height at most $B$ for which
 $y_0\dots y_3=0$, it follows that 
$$
N_{\text{loc}}(\tilde \pi,B)=N_{\text{loc}}(\pi,B)+O(\sqrt{B}),
$$
where $\pi \colon X\to Y$.
For a given vector $\y$, let
$X_\mathbf{y}\subset\mathbb P^3$ be the diagonal quadric
$$
y_0x_0^2+y_1x_1^2+y_2x_2^2+y_3x_3^2=0.
$$
Then 
$$
N_\text{loc}(\pi,B)=\frac{1}{2}\#\left\{\y \in \ZZp^4 \colon |\y|\le \sqrt{B}, ~X_\a(\mathbf{A}_\Q)\neq\emptyset\right\}.
$$
In order to prove Theorem \ref{t}, it will  suffice to prove that 
\begin{equation}\label{eq:new}
B\ll N_\text{loc}(\pi,B)\ll B.
\end{equation}
To see the lower bound, we take
$\y=(u,v,-u,-v)$, for coprime $u,v\in \Z$ such that  $|u|,|v|\le \sqrt{B}$.
For such $\y$ we obviously have 
$(1 \colon 1 \colon 1 \colon 1)\in X_\a(\mathbf{A}_\Q).
$ 
It easily follows that 
$N_\text{loc}(\pi,B)\gg B$.

\begin{rema}
The lower bound for $N_\text{loc}(\pi,B)$ arises from points on the anti-diagonal line $D\subset Y$ defined by the equations
$y_0+y_2=y_1+y_3=0$. Consider the fibration $\pi_D \colon X_D\to D$. 
The reducible fibres occur precisely above the closed points $y_0=0$ and $y_1=0$ in $D\cong \PP^1$. 
However, the fibres above these points are split and so it follows from 
\eqref{eq:def_Delta} that 
$\Delta(\pi_D)=0$. On the other hand $\rho_D=1$ and so our lower bound is consistent with 
\eqref{eq:guess}, which would predict that 
$N_{\text{loc}}(\pi_D,B)\sim cB$ for an appropriate constant  $c>0$.
\end{rema}

It remains to prove the upper bound in \eqref{eq:new}.
Let us first dispatch the contribution to $N_\text{loc}(\pi,B)$ from $\y$ for which $y_0y_1y_2y_3=0$.
But then at least two components of $\y$ must vanish and the  fibre over any such point has an obvious rational point. There are 
$
\frac{4}{\zeta(2)}B +O(\sqrt B)
$
vectors  $(a,b)\in \ZZp^2$ with $|a|,|b|\leq B$. 

We denote by $N^{\circ}(B)$ the  contribution from 
$\y$ for which $y_0y_1y_2y_3\neq 0$. Then our work so far shows that 
\begin{equation}\label{eq:newish}
N_\text{loc}(\pi,B)=N^\circ(B)+
\frac{8}{\zeta(2)}B +O(\sqrt B).
\end{equation}
For each $\y\in\Z_{\neq 0}^4$ with $y_0y_1=y_2y_3$,  there exist $\b=(t_0,t_1,t_2,t_3)\in\Z_{\neq 0}^4$ such that 
$$
y_0=t_0t_2, \quad y_1=t_1t_3, \quad y_2=t_0t_3, \quad y_3=t_1t_2.
$$
In this way it follows that 
\begin{equation}\label{eq:circ}
N^\circ(B)\ll
\#\left\{
\b\in\Z_{\neq 0}^4\colon
\begin{array}{l}
 |t_0t_2|,|t_0t_3|,|t_1t_2|,|t_1t_3|\le \sqrt B\\ 
 X_{t_0t_2,t_1t_3,t_0t_3,t_1t_2}(\mathbf{A}_\Q)\neq\emptyset
 \end{array}
 \right\}.
\end{equation}
We shall begin by studying a related counting  function  in which only square-free integers appear, before later deducing our final estimate for 
$N^\circ(B)$.

\subsection{Square-free coefficients}

For given $
U_0,\dots,U_3>0$, let
$$
\mathcal{U}=\left\{
\u\in \Z^4:
U_i/2<|u_i|\leq U_i\text{ for $0\leq i\leq 3$}\right\}.
$$
In this section we shall be concerned with estimating 
\begin{equation}\label{eq:N*}
N^*(U_0,\dots,U_3)=
\#\left\{
\u\in \mathcal{U}\colon
\begin{array}{l}
X_{u_0u_2,u_1u_3,u_0u_3,u_1u_2}(\mathbf{A}_\Q)\neq\emptyset\\
\mu^2(u_0)\cdots \mu^2(u_3)=1
 \end{array}
 \right\},
\end{equation}
where $\mu$ is the M\"obius function.
We shall prove the upper bound
\begin{equation}\label{eq:timer}
N^*(U_0,\dots,U_3)\ll 
\frac{{U_0U_1U_2U_3}}{(1+\log U_\text{min})^2},
\end{equation}
if  $U_\text{min}=\min\{U_0,\dots,U_3\}\geq 1$. 

The upper bound 
\eqref{eq:timer} is trivial if $1\leq U_\text{min}<2$ and so we may proceed under the assumption that 
$U_\text{min}\geq 2$. Our primary tool will be the following 
multidimensional form of the large sieve inequality, as worked out by Kowalski \cite[Thm.~4.1]{large}. 

\begin{lem}\label{lem:large_sieve}
Let $$
X=\{(n_1,\ldots,n_r)\in\Z^r \colon -N_j\le n_j\le N_j \text{ for $1\leq j\leq r$}\} 
$$
and let $\Omega_p\subset \F_p^r$ for all $p$. Then
$$
\#\{x\in X \colon x\mmod p \notin\Omega_p ~\forall p\le L \}\ll \frac{\prod_{i=1}^r (N_i+L^2)}{F(L)},
$$
where $$
F(L)=\sum\limits_{n\le L}\mu^2(n)\prod\limits_{p\mid n} \dfrac{|\Omega_p|}{p^r-|\Omega_p|}.
$$
\end{lem}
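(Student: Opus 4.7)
The statement is the multidimensional form of Montgomery's combinatorial large sieve, and I would prove it by reducing to a multidimensional analytic large sieve followed by the standard Selberg--Montgomery duality argument. The first step is to establish the Fourier-analytic inequality
$$
\sum_{q \le L} \sum_{\substack{\xi \in (\Z/q\Z)^r \\ \gcd(\xi_1,\ldots,\xi_r,q)=1}} \left| \sum_{n \in X} a_n \, e(\xi \cdot n / q) \right|^2 \ll \prod_{j=1}^r (N_j + L^2) \sum_{n \in X} |a_n|^2,
$$
where $e(t) = e^{2\pi i t}$ and $(a_n)$ is an arbitrary sequence supported on $X$. This can be deduced by iterating the classical one-dimensional large sieve (with Beurling--Selberg majorants) one coordinate at a time, picking up a factor of $N_j + L^2$ at each stage.

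The second step converts this analytic bound into a sifting statement. Take $a_n$ to be the indicator function of the set $S$ being counted. For each squarefree $n \le L$, the hypothesis that $S \bmod p$ avoids $\Omega_p$ for every prime $p \mid n$, combined with character orthogonality on $(\Z/n\Z)^r$ and the Chinese remainder theorem, forces the lower bound
$$
\sum_{\substack{\xi \in (\Z/n\Z)^r \\ \gcd(\xi_1,\ldots,\xi_r,n)=1}} \left| \sum_{x \in S} e(\xi \cdot x / n) \right|^2 \; \ge \; |S|^2 \cdot \mu^2(n) \prod_{p \mid n} \frac{|\Omega_p|}{p^r - |\Omega_p|}.
$$
Summing this over squarefree $n \le L$ produces $|S|^2 F(L)$ on the left-hand side, while the analytic large sieve controls the same expression by $\prod_j (N_j + L^2) \cdot |S|$. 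Dividing through by $|S|$ yields the claimed estimate.

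The main obstacle is the first step: establishing the analytic large sieve with the sharp product constant $\prod_j(N_j + L^2)$. A naive iteration conflates primitive characters of $(\Z/q\Z)^r$ with tensor products of one-dimensional primitive characters, which are not the same, so one must argue more carefully, either via a judicious grouping of characters (reducing to the one-dimensional case modulo shifts in the remaining coordinates) or by constructing a genuinely $r$-dimensional Beurling--Selberg majorant for the box $X$. Once that analytic input is in hand, the orthogonality computation in the second step is routine, and the Selberg--Montgomery duality is essentially bookkeeping.
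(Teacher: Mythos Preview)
The paper does not prove this lemma at all: it is simply quoted as the multidimensional large sieve from Kowalski's book \cite[Thm.~4.1]{large}, with no argument given. So there is no ``paper's own proof'' to compare against.

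Your sketch is a correct outline of the standard derivation, and it is essentially the route Kowalski takes: first establish the analytic large sieve over well-spaced points in $(\mathbb{R}/\mathbb{Z})^r$ (yielding the constant $\prod_j(N_j+L^2)$ for the Farey points of level $\le L$), then convert it to the arithmetic sieve via the Selberg--Montgomery orthogonality computation. Your caveat about primitivity in the iteration is well observed; the cleanest way around it is to prove the analytic inequality directly for arbitrary $\delta$-spaced points in the $r$-torus (via a tensor product of one-dimensional Beurling--Selberg majorants), which sidesteps any bookkeeping about primitive versus imprimitive characters. With that in place, the second step is exactly as you describe.
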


Define 
 $
\Omega_p=\bigcup\limits_{i=0}^3 \Omega_{p,i},
$ 
where
\begin{gather*}
\Omega_{p,0}=\{(r_0,r_1,r_2,r_3)\in \F_p^4 \colon r_0=0,\ r_1r_2r_3\neq 0,\ -r_2r_3 \notin\F_p^{\times,2}\},\\
\Omega_{p,1}=\{(r_0,r_1,r_2,r_3)\in \F_p^4 \colon r_1=0,\ r_0r_2r_3\neq 0,\ -r_2r_3 \notin\F_p^{\times,2}\},\\
\Omega_{p,2}=\{(r_0,r_1,r_2,r_3)\in \F_p^4 \colon r_2=0,\ r_0r_1r_3\neq 0,\ -r_0r_1 \notin\F_p^{\times,2}\},\\
\Omega_{p,3}=\{(r_0,r_1,r_2,r_3)\in \F_p^4 \colon r_3=0,\ r_0r_1r_2\neq 0,\ -r_0r_1 \notin\F_p^{\times,2}\}.
\end{gather*}
Let  $\u$ be a vector  counted by 
$N^*(U_0,\dots,U_3)$. The following result shows that 
$\u \bmod{p} \not \in \Omega_p$ for any prime $p$. 

\begin{lem}\label{lem:local}
Let $p$ be a prime and let  $\u\in \Z^4$ be such that 
$\mu^2(u_0)\cdots \mu^2(u_3)=1$. If  
 $\u\bmod p\in\Omega_p$ then
$
X_{u_0u_2,u_1u_3,u_0u_3,u_1u_2}(\Q_p)=\emptyset
$.
 \end{lem}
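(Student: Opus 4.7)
The plan is to use the symmetry of the four sets $\Omega_{p,i}$ to reduce to a single case and then run a short $p$-adic descent that is powered by the squarefreeness hypothesis. First I would observe that the lemma is vacuous when $p=2$: since $\F_2^{\times,2}=\F_2^\times$, the condition $-r_jr_k\notin\F_p^{\times,2}$ forces $r_jr_k=0$, which contradicts the accompanying nonvanishing hypotheses in the definition of $\Omega_{p,i}$. So I may assume $p$ is odd. By the symmetric role of the indices I may further assume $\u\bmod p\in\Omega_{p,0}$, i.e.\ $p\mid u_0$, $p\nmid u_1u_2u_3$, and $-u_2u_3$ is a non-square modulo $p$. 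Since $u_0$ is squarefree, $p$ divides $u_0$ exactly once.

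Suppose for contradiction that $X_{u_0u_2,u_1u_3,u_0u_3,u_1u_2}(\Q_p)\neq\emptyset$, realised by a primitive vector $\mathbf{x}\in\Z_p^4$ satisfying $u_0u_2x_0^2+u_1u_3x_1^2+u_0u_3x_2^2+u_1u_2x_3^2=0$. Reducing modulo $p$ and using $p\mid u_0$ gives $u_1(u_3x_1^2+u_2x_3^2)\equiv 0\pmod p$, and since $p\nmid u_1$, multiplying by $u_3$ yields $(u_3x_1)^2\equiv -u_2u_3\,x_3^2\pmod p$. If $p\nmid x_3$, this exhibits $-u_2u_3$ as a square modulo $p$, contradicting the hypothesis. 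Hence $p\mid x_3$, and feeding this back forces $p\mid x_1$.

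The second step of the descent exploits the squarefreeness. Write $x_1=px_1'$ and $x_3=px_3'$, so that dividing the quadric equation by $p$ gives $(u_0/p)u_2x_0^2+pu_1u_3(x_1')^2+(u_0/p)u_3x_2^2+pu_1u_2(x_3')^2=0$. Here $u_0/p$ is a $p$-adic unit precisely because $u_0$ is squarefree. Reducing modulo $p$ and cancelling $u_0/p$ gives $u_2x_0^2+u_3x_2^2\equiv 0\pmod p$; the same non-square argument then forces $p\mid x_0$ and $p\mid x_2$, contradicting the primitivity of $\mathbf{x}$.

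The only subtle points are the placement of the squarefreeness hypothesis (used exactly once, to guarantee that $u_0/p$ remains a unit after the first round of descent; mishandling this would either weaken the conclusion or require an infinite descent) and the vacuous case $p=2$. Everything else is a routine mod-$p$ calculation, and the argument is completely symmetric in the four cases $\Omega_{p,0},\ldots,\Omega_{p,3}$.
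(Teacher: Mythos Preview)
Your proof is correct and follows essentially the same route as the paper's: reduce by symmetry to $\Omega_{p,0}$, take a primitive $p$-adic solution, use $p\mid u_0$ and the non-square condition on $-u_2u_3$ to force $p\mid x_1,x_3$, then divide through by $p$ (using squarefreeness of $u_0$ exactly once) and repeat to force $p\mid x_0,x_2$, contradicting primitivity. Your explicit handling of the vacuous case $p=2$ is a small addition not spelled out in the paper.
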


\begin{proof}
This is standard but we include a full proof for the sake of completeness. 
Assume without loss of generality that  $\u\bmod p\in\Omega_{p,0}$. We suppose for a contradiction  that there exists 
a solution $(x_0 \colon x_1 \colon  x_2 \colon x_3)$ over $\Q_p$ to the equation
$$
u_0u_2x_0^2+u_1u_3x_1^2+u_0u_3x_2^2+u_1u_2x_3^2=0.
$$
We can assume that  $(x_0,x_1,x_2,x_3)\in\Z_p^4$ is primitive. Since $p\mid u_0$, it follows that 
 $u_1u_3x_1^2+u_1u_2x_3^2\equiv 0\bmod p$. But $p\nmid u_1$ and so
$$
u_3x_1^2+u_2x_3^2\equiv 0\bmod p.
$$ 
But  $-u_2u_3$ is a non-square modulo $p$ and so $x_1\equiv x_3\equiv 0\bmod p$. Write  $x_1=px_1'$, $x_3=px_3'$ and  $u_0=pu_0'$, 
where $p\nmid u_0'$ since 
 $u_0$ is square-free.
 Then it follows that 
$$
u_0'u_2x_0^2+pu_1u_3x_1'^2+u_0'u_3x_2^2+pu_1u_2x_3'^2=0,
$$
whence 
$u_2x_0^2+u_3x_2^2\equiv 0\bmod p$. But this implies that   $x_0\equiv x_2\equiv 0\bmod p$, which contradicts the primitivity of $(x_0,x_1,x_2,x_3)$.
\end{proof}

In order to apply the large sieve we need to calculate the size of 
$\Omega_p$. But for $p>2$ we clearly have 
$$
|\Omega_{p,i}|=(p-1)^2\cdot\frac{p-1}{2}=\frac{(p-1)^3}{2},
$$ 
for 
$0\leq i\leq 3$. Hence it follows that 
\begin{equation}\label{eq:OM}
|\Omega_p|=2p^3\left(1-\frac{1}{p}\right)^3.
\end{equation}
Lemma \ref{lem:local} implies that 
$$
N^*(U_0,\dots,U_3)\leq 
\#\left\{\u\in \mathcal{U}\colon
\u\bmod{p}\not\in \Omega_p ~\forall p\leq L\right\},
 $$
for any $L\geq 1$. On appealing to 
Lemma \ref{lem:large_sieve}, we conclude
that 
\begin{equation}\label{eq:time}
N^*(U_0,\dots,U_3)\ll 
\frac{\prod_{i=0}^3 (U_i +L^2)}{F(L)},
\end{equation}
where  
 \begin{align*}
F(L)
&=\sum\limits_{n\le L}\mu^2(n)\prod\limits_{p\mid n} \dfrac{|\Omega_p|}{p^4-|\Omega_p|}.
\end{align*}
The following result gives a lower bound for this quantity.

\begin{lem}\label{lem:log}
We have $F(L)\gg (\log L)^2$, for any $L\geq 2$.
\end{lem}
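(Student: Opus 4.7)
The plan is to reduce the bound $F(L) \gg (\log L)^2$ to a Tauberian estimate for a Dirichlet series with a double pole at $s=0$. The first ingredient is a precise asymptotic for the Euler weights. From \eqref{eq:OM} we have $|\Omega_p| = 2p^3(1-1/p)^3$ for $p > 2$, together with $|\Omega_2|=0$ (since $-r_2 r_3 \in \F_2^{\times,2}$ automatically). A direct expansion gives
\[
h(p) := \frac{|\Omega_p|}{p^4-|\Omega_p|} = \frac{2}{p} + O\!\left(\frac{1}{p^2}\right),
\]
and hence, by Mertens' theorem, $\sum_{p \le x} h(p) = 2\log\log x + O(1)$.

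Next I would pass to the generating Dirichlet series
\[
\mathcal{F}(s) := \sum_{n \ge 1} \frac{\mu^2(n) \prod_{p \mid n} h(p)}{n^s} = \prod_p \left(1 + \frac{h(p)}{p^s}\right),
\]
whose summatory function is $F(L)$. Writing $\mathcal{F}(s) = \zeta(s+1)^2 \, G(s)$ with $G(s) = \prod_p (1 + h(p)/p^s)(1 - p^{-s-1})^2$, the refined expansion $h(p) - 2/p = O(1/p^2)$ forces each Euler factor of $G$ to take the shape $1 + O(p^{-s-2})$. Thus $G$ is an absolutely convergent Euler product for $\Re(s) > -1$, and $G(0) > 0$ (the $p=2$ factor is $1/4$, and each factor with $p \ge 3$ equals $1 - 5/p^2 + O(p^{-3})$, which is positive).

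The final step is a Tauberian argument. Since the Dirichlet coefficients of $\mathcal F$ are non-negative and $\mathcal F(s) \sim G(0)/s^2$ as $s \to 0^+$, Karamata's Tauberian theorem (applied to the Laplace--Stieltjes transform $\mathcal F(s) = \int_0^\infty e^{-su}\,dF(e^u)$) yields
\[
F(L) \sim \tfrac12 \, G(0)\,(\log L)^2 \qquad (L \to \infty),
\]
and the desired lower bound then follows, the case of bounded $L$ being immediate from $F(L) \ge 1$. The main technical point I anticipate is the Tauberian step in this slightly non-standard setting (the singularity is at $s=0$ rather than at the usual abscissa $s=1$); one way to circumvent this is to apply Selberg--Delange or Wirsing's theorem instead to the auxiliary multiplicative function $n \mapsto n\,\mu^2(n) \prod_{p \mid n} h(p)$, whose prime values tend to $2$, and then pass back to $F(L)$ by partial summation.
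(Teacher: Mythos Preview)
Your approach is correct and in fact yields the asymptotic $F(L)\sim\tfrac12 G(0)(\log L)^2$, which is stronger than required. One small slip: the Euler factor of $G$ also carries a term $-3p^{-(2s+2)}$ coming from squaring $(1-p^{-s-1})$, so the product for $G$ converges absolutely only for $\Re(s)>-1/2$, not $\Re(s)>-1$ as you wrote. This is harmless, since you only need continuity and positivity of $G$ at $s=0$, and Karamata's theorem for non-negative Laplace--Stieltjes transforms applies exactly as you describe.

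The paper proceeds differently and more elementarily. Rather than analysing the full Dirichlet series, it uses the crude lower bound
\[
h(p)=\frac{|\Omega_p|}{p^4-|\Omega_p|}\ge \frac{|\Omega_p|}{p^4}=\frac{2}{p}\Bigl(1-\frac1p\Bigr)^3,
\]
which replaces $F(L)$ by the cleaner minorant $\sum_{n\le L}\mu^2(n)\tau(n)\phi^*(n)^3/n$, and then appeals to a black-box mean-value theorem for non-negative multiplicative functions (Friedlander--Iwaniec, \emph{Opera de cribro}, Thm.~A.5) after verifying $\sum_{p\le x}g(p)\log p=2\log x+O(1)$ via Mertens. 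So the paper trades your Tauberian step for an inequality plus a citation; your route gives the leading constant, while theirs is quicker for the bare lower bound.
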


\begin{proof}
It follows from \eqref{eq:OM} that 
\begin{align*}
\dfrac{|\Omega_p|}{p^4-|\Omega_p|}
&= \frac{2p^3(1-\frac{1}{p})^3}{p^4(1-\frac{2}{p}(1-\frac{1}{p})^3)}\\
&\geq \frac{2}{p}\left(1-\frac{1}{p}\right)^3.
\end{align*}
Hence 
\begin{align*}
F(L)
&\geq   \sum_{n\le L}\frac{\mu^2(n)\tau(n)\phi^*(n)^3}{n},
\end{align*}
where $\tau(n)$ is the divisor function and 
$$
\phi^*(n)=\prod_{p\mid n}\left(1-\frac{1}{p}\right).
$$
We now  apply a standard result on the asymptotic evaluation of multiplicative arithmetic functions supported on square-free integers, as supplied  by Friedlander and Iwaniec \cite[Thm.~A.5]{FI}.
Let $g(n)=\mu^2(n) \tau(n)\phi^*(n)^3/n$. Then 
$$
\sum_{p\leq x}g(p)\log p=2\sum_{p\leq x}\frac{\log p}{p} +O(1)=2\log x +O(1),
$$
by Mertens' theorem.  Moreover, the conditions (A.16) and (A.17) of  
 \cite[Thm.~A.5]{FI} are easily verified. Hence, on taking $k=2$ in this result, 
  it follows that 
 $$
\sum_{n\leq L} g(n) \gg (\log L)^2,
 $$
 which thereby establishes the lemma. 
\end{proof}

Let $U_\text{min}=\min\{U_0,\dots,U_3\}$ and recall that we are assuming that  $U_\text{min}\geq 2$. 
Inserting Lemma \ref{lem:log} into \eqref{eq:time} and taking $L=\sqrt{U_\text{min}}$, we finally arrive at the upper bound \eqref{eq:timer}.

\subsection{Deduction of the general case}

We now turn to the estimation of $N^\circ(B)$, starting from \eqref{eq:circ}.
Pick $T_0,\dots,T_3$ such that 
\begin{equation}\label{eq:Ti}
T_0,\dots,T_3\geq 1 \quad \text{ and }\quad 
T_0T_2 , T_0T_3, T_1T_2, T_1T_3\leq 4\sqrt B.
\end{equation}
We shall examine the contribution $N(T_0,\dots,T_3)$, say, from those $\b\in\Z_{\neq 0}^4$ for which 
$T_i/2<|t_i|\leq T_i$, for $0\leq i\leq 3$.
Then 
\begin{equation}\label{eq:timeless}
N^\circ(B)\ll \sum_{T_0,\dots,T_3} N(T_0,\dots,T_3),
\end{equation}
where $T_0,\dots,T_3$ run over powers of $2$ subject to the inequalities in \eqref{eq:Ti}.

Each  non-zero integer $t_i$ admits a factorisation $u_im_i^2$ for square-free $u_i\in \Z$ and $m_i\in \Z_{>0}$.
Moreover, 
$$
 X_{t_0t_2,t_1t_3,t_0t_3,t_1t_2}(\mathbf{A}_\Q)\neq\emptyset \Longleftrightarrow
 X_{u_0u_2,u_1u_3,u_0u_3,u_1u_2}(\mathbf{A}_\Q)\neq\emptyset .
 $$
 Hence it follows that 
 $$
 N(T_0,\dots,T_3)\leq \sum_{m_0\leq \sqrt{T_0}}
 \dots 
 \sum_{m_3\leq \sqrt{T_3}} N^*\left(\frac{T_0}{m_0^2},\dots,\frac{T_3}{m_3^2}\right),
 $$
 in the notation of 
\eqref{eq:N*}. On appealing to \eqref{eq:timer}, we deduce that 
 \begin{align*}
 N(T_0,\dots,T_3)
 &\ll 
 \sum_{m_0\leq \sqrt{T_0}}
 \dots 
 \sum_{m_3\leq \sqrt{T_3}} 
  \sum_{j\in \{0,\dots,3\}}
\frac{T_0T_1T_2T_3}{m_0^2m_1^2m_2^2m_3^2 (1+\log (T_j/m_j^2))^2 }
\\
&\ll 
T_0T_1T_2T_3
 \sum_{j\in \{0,\dots,3\}}
 \sum_{m\leq \sqrt{T_j}}
\frac{1}{m^2 (1+\log (T_j/m^2))^2 }\\
&\ll 
\frac{T_0T_1T_2T_3}{(1+\log T_\text{min})^2},
 \end{align*}
 where $T_\text{min}=\min\{T_0,\dots,T_3\}$.
Returning to 
\eqref{eq:timeless}, we finally conclude that 
$$
N^\circ(B)\ll \sum_{T_0,\dots,T_3} \frac{T_0T_1T_2T_3}{(1+\log T_\text{min})^2},
$$
where $T_0,\dots,T_3$ run over powers of $2$ subject to the inequalities in \eqref{eq:Ti}.

By symmetry, we can suppose without loss of generality that 
$T_0\leq T_1$ and $T_2\leq T_3$.
Summing first over $T_1\leq 4\sqrt{B}/T_3$,  we see that
\begin{align*}
\sum_{T_0,\dots,T_3} \frac{T_0T_1T_2T_3}{(1+\log T_\text{min})^2}
&\ll \sqrt B \sum_{T_0,T_2,T_3}
\frac{T_0T_2}{(1+\log \min\{T_0,T_2\})^2}.
\end{align*}
Suppose first that $T_0\leq T_2$. Then we sum over $T_2\leq T_3$ and then $T_3\leq 4\sqrt{B}/T_0$ to get
\begin{align*}
\sum_{T_0,\dots,T_3} \frac{T_0T_1T_2T_3}{(1+\log T_\text{min})^2}
&\ll \sqrt B \sum_{T_0,T_3}
\frac{T_0T_3}{(1+\log T_0)^2}\\
&\ll B \sum_{T_0}
\frac{1}{(1+\log T_0)^2},
\end{align*}
where the sum is over $T_0=2^j$ with $1\leq 2^j\leq 2B^{1/4}$.  This sum is convergent, whence this case contributes  $O(B)$. Alternatively, if 
 $T_0> T_2$, then we sum over $T_0\leq 4\sqrt{B}/T_3$  and then $T_3\geq T_2$ 
to get
\begin{align*}
\sum_{T_0,\dots,T_3} \frac{T_0T_1T_2T_3}{(1+\log T_\text{min})^2}
&\ll \sqrt B \sum_{T_0,T_2,T_3}
\frac{T_0T_2}{(1+\log T_2)^2}\\
&\ll B \sum_{T_2,T_3}
\frac{T_2}{T_3(1+\log T_2)^2}\\
&\ll B \sum_{T_2}
\frac{1}{(1+\log T_2)^2}.
\end{align*}
This also makes the satisfactory contribution $O(B).$

Hence we have shown that $N^\circ(B)=O(B).$
Once inserted into \eqref{eq:newish}, this therefore completes the proof of the upper bound in \eqref{eq:new}.


\begin{thebibliography}{99}

\bibitem{BL}
T.D. Browning and D. Loughran, 
Sieving rational points on varieties.
{\em Trans. Amer. Math. Soc.} {\bf 371} (2019), 5757--5785.

\bibitem{f-m-t}
J. Franke, Y.I. Manin and Y. Tschinkel,
Rational points of bounded height on {F}ano varieties.
{\em Invent.\ Math.\ } {\bf 95} (1989), 421--435.

\bibitem{FI}
J.B. Friedlander and  H. Iwaniec,
 {\em Opera de cribro}. American Math.\ Soc., 2010.

\bibitem{fi}
J.B. Friedlander and H. Iwaniec,
Ternary quadratic forms with rational zeros.
{\em J.\ Th\'eor.\ Nombres Bordeaux} {\bf 22} (2010), 97--113.

\bibitem{Hironaka64}
H. Hironaka,
Resolution of singularities of an algebraic variety over a field of characteristic zero. {I \& II}.
{\em Annals  Math.} {\bf 79} (1964), 109--203 \& 205--326.

\bibitem{large}
E. Kowalski, {\em The large sieve and its applications}.
Cambridge University Press,  2009.
	\bibitem{LS} 
D. Loughran and A. Smeets, Fibrations with few rational points. 
{Geom. Funct. Anal.} {\bf 26} (2016), 1449--1482.

\bibitem{peyre-freedom}
E. Peyre, 
Libert\'e et accumulation. {\em 
Documenta Math.} {\bf 22} (2017), 1615--1659.


	\bibitem{Poonen}
B. Poonen, {\em Rational Points on Varieties}.
Volume 186 of {\em Graduate Studies in Mathematics}.
American Mathematical Society, Providence, RI, 2017.
	
	\bibitem{PV}
B. Poonen and J.F. Voloch, Random Diophantine equations. 
{\em Arithmetic of higher-dimensional algebraic varieties (Palo Alto,
CA, 2002)}, 175--184,
Progr. Math. {\bf 226}, Birkh\"auser, 2004.



\end{thebibliography}
\end{document}